\documentclass[a4paper]{amsart}
\numberwithin{equation}{section}
\newtheorem{theorem}{Theorem}[section]
\newtheorem{corollary}{Corollary}[section]

\newtheorem{lemma}{Lemma}[section]

\theoremstyle{remark}

\usepackage{hyperref}
\usepackage{color}
\usepackage{graphicx}
\title[logarithmic coefficients]
 {On logarithmic coefficients of certain starlike functions related to the vertical strip\\
 The Journal Of Analysis, DOI: 10.1007/s41478-018-0157-7}
\subjclass[2010]{30C50; 30C45}
\keywords{Univalent; Starlike; Vertical strip; Logarithmic coefficients; Subordination; Hadamard product}
\begin{document}
\begin{abstract}
In the present paper two certain subclasses of the starlike functions associated with the vertical strip are considered.
 The main aim of this paper is to investigate some basic properties of these classes such as, subordination relations, sharp inequalities for sums involving logarithmic coefficients and estimate of logarithmic coefficients for functions belonging to these subclasses.
\end{abstract}

\author[R. Kargar]
       {Rahim Kargar}

\address{Young Researchers and Elite Club,
Ardabil Branch, Islamic Azad University, Ardabil, Iran}
       \email {rkargar1983@gmail.com}
\maketitle
\section{Introduction}
Throughout this paper $\Delta$ is the open unit disc on the complex plane $\mathbb{C}$. Let $\mathcal{H}$ be the class of all analytic functions in $\Delta$ and $\mathcal{A}$ be a subclass of $\mathcal{H}$ with the normalization $f(0)=f'(0)-1=0$. The subclass of $\mathcal{A}$ consisting of all univalent functions $f$ in $\Delta$ is denoted by $\mathcal{S}$. Let $\mathcal{S}^*$ and $\mathcal{K}$ denote the subclasses of $\mathcal{S}$ consisting of the normalized starlike and convex functions in $\Delta$, respectively. Also, we say that a function $f\in \mathcal{A}$ is close--to--convex, if there is a convex function $g$ such that
\begin{equation*}\label{CL}
  {\rm Re}\left\{\frac{f'(z)}{g'(z)}\right\}>0\quad(z\in\Delta).
\end{equation*}
Let $\mathcal{U}(\lambda)$ denote the set of all $f\in \mathcal{A}$ in $\Delta$ satisfying the condition
\begin{equation*}
  \left|\left(\frac{z}{f(z)}\right)^2 f'(z)-1\right|<\lambda\quad(z\in \Delta),
\end{equation*}
where $0<\lambda\leq 1$. For more details and interesting properties of the family $\mathcal{U}(\lambda)$, the reader may refer to \cite{obr BMalaysian}.
Also, let $\mathcal{G}(a)$ denote the class of locally univalent normalized analytic functions $f$ in $\Delta$ satisfying the condition
\begin{equation*}
  {\rm Re}\left\{1+\frac{zf''(z)}{f'(z)}\right\}<1+\frac{a}{2}\quad(a>0,\, z\in \Delta).
\end{equation*}
The class $\mathcal{G}(a)$ has been studied extensively by Kargar {\it et al.} \cite{kargarJMAA}, Maharana {\it et al.} \cite{Mah}, Obradovi\'{c} {\it et al.} \cite{ObSib}, and Ponnusamy and Sahoo \cite{PoSah}.

It is well--known that the logarithmic coefficients have had great impact in the development of the theory of univalent functions. For example, de Branges by use of this concept, was able to prove the famous Bieberbach's conjecture \cite{de Branges}.
The logarithmic coefficients $\gamma_n:=\gamma_n(f)$ of $f\in \mathcal{A}$ are defined by
\begin{equation}\label{log coef}
  \log\left\{\frac{f(z)}{z}\right\}=\sum_{n=1}^{\infty}2\gamma_n z^n\quad (z\in \Delta).
\end{equation}
As an example consider the rotation of Koebe function
\begin{equation*}\label{Koebe}
  k_\varepsilon(z)=\frac{z}{(1-\varepsilon z)^2}\quad (|\varepsilon|=1).
\end{equation*}
Then a simple calculation gives that
\begin{equation*}
\gamma_n(k_\varepsilon)=\frac{\varepsilon^n}{n}  \quad(n\geq1).
\end{equation*}
The inequality $|\gamma_n(f)|\leq 1/n$ holds for each starlike function $f\in\mathcal{S}$ and the equality is attained for the rotation of Koebe function, but it is false for the full class $\mathcal{S}$, even in order of magnitude.
Let $f\in \mathcal{S}$ and $f(z)=z+\sum_{n=2}^{\infty}a_n z^n$. Then by \eqref{log coef}, it follows that
\begin{equation*}
  \gamma_1=\frac{a_2}{2}\quad{\rm and} \quad \gamma_2=\frac{1}{2}\left(a_3-\frac{a_2^2}{2}\right)
\end{equation*}
and thus
the following sharp estimates hold
\begin{equation*}
  |\gamma_1|\leq1 \quad{\rm and}\quad |\gamma_2|\leq \frac{1}{2}(1+2e^{-2})\approx 0.635.
\end{equation*}
However, the sharp estimate of $|\gamma_n|$ when $n\geq 3$ and $f\in\mathcal{S}$ it is still open. For more explanation of this issue, it is necessary to point out that
there is a bounded and univalent function with logarithmic coefficients $\gamma_n$ such that $\gamma_n\neq O(n^{-0.83})$ \cite[p. 242]{Dur}. Also, there exists a close--to--convex function $f$ such that $|\gamma_n(f)|> 1/n$, \cite{girela}. In completing this entry Ye showed that the logarithmic coefficients $\gamma_n$ of each close--to--convex function $f$ in $\mathcal{S}$ satisfy $|\gamma_n(f)|\leq (A\log n)/n$, where $A$ is an absolute constant, see \cite{ye}.

Sharp inequalities are known for sums involving logarithmic coefficients. For instance, the logarithmic coefficients $\gamma_n$ of every function $f\in \mathcal{S}$ satisfy the sharp inequality
\begin{equation}\label{ineq. pi26}
\sum_{n=1}^{\infty}|\gamma_n|^2\leq \frac{\pi^2}{6}
\end{equation}
and the equality is attained for the Koebe function (see \cite[Theorem 4]{DurLeu}). Also, for each $f\in \mathcal{S}$ the sharp inequality
\begin{equation*}
\sum_{n=1}^{\infty}\left(\frac{n}{n+1}\right)^2|\gamma_n|^2\leq 4\sum_{n=1}^{\infty}\left(\frac{n}{n+1}\right)^2\frac{1}{n^2}=\frac{2\pi^2-12}{3}
\end{equation*}
holds (see \cite{Roth}).
Recently, Obradovi\'{c} {\it et al.} \cite{obradovicMM} proved that the logarithmic coefficients $\gamma_n$ of any $f\in \mathcal{U}(\lambda)$
satisfy the sharp inequality
\begin{equation*}
  \sum_{n=1}^{\infty}|\gamma_n|^2\leq \frac{1}{4}\left(\frac{\pi^2}{6}+2Li_2(\lambda)+Li_2(\lambda^2)\right),
\end{equation*}
where $Li_2$ denotes the dilogarithm function and that the logarithmic coefficients $\gamma_n$ of $f\in\mathcal{U}(1)$ satisfy the inequality \eqref{ineq. pi26}. Also, they proved that (see \cite[Theorem 2]{obradovicMM}) the logarithmic coefficients $\gamma_n$ of $f\in \mathcal{G}(a)$
satisfy the inequalities
\begin{equation*}
  \sum_{n=1}^{\infty}n^2|\gamma_n|^2\leq \frac{a}{4(a+2)}\quad(0<a\leq 1),
\end{equation*}
\begin{equation*}
  \sum_{n=1}^{\infty}|\gamma_n|^2\leq \frac{a^2}{4}Li_2\left((1+a)^{-2}\right)\quad(0<a\leq 1)
\end{equation*}
and
\begin{equation}\label{gamma n G}
  |\gamma_n|\leq \frac{a}{2(a+1)n}\quad(0<a\leq 1, n=1,2,\ldots).
\end{equation}
It's worth mentioning, that the above inequality \eqref{gamma n G} is not sharp. Very recently the sharp estimates for the initial logarithmic coefficients $\gamma_n$ of $f\in \mathcal{G}(a)$ where $0<a\leq 1$ and $n=1,2,3$ were obtained by Ponnusamy {\it et al.} (see \cite[Theorem 2.10]{PoShWi1}). They, also studied the logarithmic inverse coefficients, denoted by $\Gamma_n(F)$, of $f\in \mathcal{G}(a)$, where $F$ is the inverse function of $f\in \mathcal{G}(a)$. For more details see \cite{PoShWi0}.

In the sequel, we recall two certain subclasses of the starlike functions. Let $\mathcal{S}(\alpha,\beta)$ denote the class of all functions $f\in\mathcal{A}$ which satisfy the following two--sided inequality
\begin{equation*}\label{S(alpha,beta)}
  \alpha<{\rm Re}\left\{\frac{zf'(z)}{f(z)}\right\}<\beta\quad(\alpha<1,\, \beta>1).
\end{equation*}
The class $\mathcal{S}(\alpha,\beta)$ was introduced in \cite{KO2011} and studied in \cite{Kwon} and \cite{sim2013}. Also, we expanded the class $\mathcal{S}(\alpha,\beta)$ in \cite{Kargar(IJNAA)}.
By definition of subordination, $f\in \mathcal{S}(\alpha,\beta)$ if, and only if
\begin{equation}\label{f in S(al,be),iff}
  \frac{zf'(z)}{f(z)}\prec P_{\alpha,\beta}(z)\quad (z\in\Delta),
\end{equation}
where
\begin{equation}\label{P alpha beta}
  P_{\alpha,\beta}(z):=1+\frac{\beta-\alpha}{\pi}i \log \left(\frac{1-e^{2\pi
   i\frac{1-\alpha}{\beta-\alpha}}z}{1-z}\right).
\end{equation}
The function $P_{\alpha,\beta}(z)$ is convex univalent in $\Delta$ and has the form
\begin{equation}\label{P=1+}
  P_{\alpha,\beta}(z)=1+\sum_{n=1}^{\infty} B_n z^n,
\end{equation}
where
\begin{equation}\label{B-n}
B_n=\frac{\beta-\alpha}{n\pi}i \left(1-e^{2n\pi
i\frac{1-\alpha}{\beta-\alpha}}\right)\quad (n=1,2,\ldots)
\end{equation}
and maps
$\Delta$ onto a convex domain
\begin{equation*}\label{Omega}
    \Omega_{\alpha,\beta}:=\{ w\in \mathbb{C}: \alpha<
    {\rm Re}\, w<\beta\}
\end{equation*}
conformally. Recently, the function $P_{\alpha,\beta}(z)$ has been studied by many works, see for example \cite{KES(Siberian), Kargar(IJNAA), KO2011, Kwon, simBM, sim2013}.

Also, we say that a function $f\in\mathcal{A}$ belongs
to the class $\mathcal{M}(\delta)$, if $f$ satisfies
\begin{equation*}\label{1definition}
    1+\frac{\delta-\pi}{2 \sin \delta}<
    {\rm Re}\left\{\frac{zf'(z)}{f(z)}\right\} <
    1+\frac{\delta}{2\sin \delta} \quad (z\in\Delta),
\end{equation*}
where $\pi/2\leq \delta<\pi$. The class $\mathcal{M}(\delta)$ was introduced by Kargar {\it et al.} \cite{KES(Complex)}.
Moreover, by definition of subordination, $f\in\mathcal{M}(\delta)$ if, and
only if
\begin{equation*}\label{1l11}
\left(\frac{z f'(z)}{f(z)}-1\right)\prec
\mathcal{B}_\delta(z)
\quad (z\in\Delta),
\end{equation*}
where
\begin{equation}\label{B}
    \mathcal{B}_\delta(z):=\frac{1}{2i\sin
    \delta}\log\left(\frac{1+ze^{i\delta}}{1+ze^{-i\delta}}\right)\quad
    (z\in \Delta).
\end{equation}
The function $\mathcal{B}_\delta(z)$ due to Dorff \cite{MD} and studied in \cite{Dorff1997}, \cite{Dorff2012}, \cite{Dorff2015} and \cite{Dorff Rocki}.
The function $\mathcal{B}_\delta(z)$ is convex univalent in $\Delta$ and has the form
\begin{equation}\label{Bs}
    \mathcal{B}_\delta(z)=\sum_{n=1}^{\infty}A_n z^n \quad (z\in
    \Delta),
\end{equation}
where
\begin{equation}\label{A}
    A_n=\frac{(-1)^{(n-1)}\sin n\delta}{n \sin \delta}
    \quad (n=1,2,\ldots).
\end{equation}
The following lemma due to Ruscheweyh and Stankiewicz, will be useful in this paper.
\begin{lemma}\label{lem f ast g}{\rm (}see \cite{RUST}{\rm )}
Let $\phi,\varphi\in \mathcal H$ be any convex univalent functions in
$\Delta$. If $f(z)\prec \phi(z)$ and $g(z)\prec \varphi(z)$, then
\begin{equation*}\label{f*g}
    f(z)*g(z)\prec \phi(z)*\varphi(z) \quad (z\in \Delta),
\end{equation*}
where "*" denotes the Hadamard product.
\end{lemma}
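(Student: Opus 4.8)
The plan is to establish the lemma via the integral (convex-combination) representation of functions subordinate to a convex univalent function, together with the P\'olya--Schoenberg theorem on convolutions of convex functions. Write $X=\{x\in\mathbb{C}:|x|=1\}$. The first ingredient I would record is the following description of subordination to a convex univalent function: if $\Phi$ is convex univalent in $\Delta$, then an analytic $h$ with $h(0)=\Phi(0)$ satisfies $h\prec\Phi$ if and only if there is a probability measure $\mu$ on $X$ with $h(z)=\int_X\Phi(xz)\,d\mu(x)$. The ``if'' direction follows because each rotation $\Phi(x\,\cdot)$ maps $\Delta$ univalently onto $\Phi(\Delta)$ and fixes $\Phi(0)$, hence $\Phi(x\,\cdot)\prec\Phi$; and the family $\{h:h\prec\Phi\}$ is convex (since $\Phi(\Delta)$ is a convex domain and, $\Phi$ being univalent, $h\prec\Phi$ just means $h(\Delta)\subseteq\Phi(\Delta)$ and $h(0)=\Phi(0)$) and closed under locally uniform limits by Hurwitz's theorem, so it contains every barycenter of its rotations. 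The ``only if'' direction is the substantive point: it rests on the fact that the extreme points of the compact convex family $\{h:h\prec\Phi\}$ are precisely the rotations $\{\Phi(x\,\cdot):x\in X\}$, after which Choquet's theorem supplies the representing measure. (Alternatively one may simply quote this representation from Ruscheweyh's work on convolutions.)

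Next I would apply this to $f\prec\phi$ and $g\prec\varphi$, obtaining probability measures $\mu,\nu$ on $X$ with $f(z)=\int_X\phi(xz)\,d\mu(x)$ and $g(z)=\int_X\varphi(yz)\,d\nu(y)$. Comparing Taylor coefficients one sees that $\phi(x\,\cdot)*\varphi(y\,\cdot)=(\phi*\varphi)(xy\,\cdot)$, and since the Hadamard product acts coefficient-wise it commutes with the integrals; hence
\begin{equation*}
 (f*g)(z)=\int_X\!\int_X(\phi*\varphi)(xyz)\,d\mu(x)\,d\nu(y)=\int_X(\phi*\varphi)(uz)\,d\rho(u),
\end{equation*}
where $\rho$ is the push-forward of $\mu\times\nu$ under the map $(x,y)\mapsto xy$, again a probability measure on $X$. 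Thus $f*g$ is a barycenter of rotations of $\phi*\varphi$.

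Finally I would invoke the P\'olya--Schoenberg theorem (Ruscheweyh and Sheil-Small) to conclude that $\phi*\varphi$ is itself convex univalent in $\Delta$. Then the ``if'' direction of the representation recorded in the first step, applied with $\Phi=\phi*\varphi$, shows that the barycenter $f*g=\int_X(\phi*\varphi)(u\,\cdot)\,d\rho(u)$ is subordinate to $\phi*\varphi$, which is exactly the assertion of the lemma. The main obstacle is the ``only if'' half of the representation theorem --- extracting the representing measure $\mu$ from $f\prec\phi$ --- since this requires the identification of the extreme points of $\{h:h\prec\phi\}$ together with Choquet's theorem (or, equivalently, a direct Herglotz-type construction); the convexity of $\phi*\varphi$ furnished by P\'olya--Schoenberg is the other nontrivial input. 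Both are classical results and, in the context of this paper, may be cited rather than reproved; everything else in the argument is routine coefficient bookkeeping and elementary measure theory.
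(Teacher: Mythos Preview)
The paper does not actually prove this lemma; it is quoted from \cite{RUST} and used as a black box, so there is no argument in the paper to compare yours against. Your proposal, however, contains a genuine gap: the integral representation you invoke for functions subordinate to a convex univalent $\Phi$ is false. Take $\Phi(z)=z$. Then $\{h:h\prec\Phi\}$ is the class of all Schwarz functions, while every integral $\int_{X}\Phi(xz)\,d\mu(x)=\bigl(\int_{X}x\,d\mu(x)\bigr)z$ is merely a linear map $cz$ with $|c|\le1$; in particular $z^{2}\prec z$ admits no such representation. Your extreme-point claim fails for the same reason: $z^{2}$ is an extreme point of the Schwarz class (if $z^{2}=th_{1}+(1-t)h_{2}$ with the $h_{j}$ Schwarz and $0<t<1$, comparing radial boundary values on $|z|=1$ forces $|h_{j}|=1$ a.e.\ with the same argument as $e^{2i\theta}$, whence $h_{1}=h_{2}=z^{2}$), yet $z^{2}$ is not a rotation $xz$. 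For convex $\Phi$ the family $\{h:h\prec\Phi\}$ is indeed convex and compact, but its extreme points are far more numerous than the rotations of $\Phi$, so Choquet's theorem cannot supply the measures $\mu,\nu$ you need, and the bilinear step producing $f*g=\int_{X}(\phi*\varphi)(u\,\cdot)\,d\rho(u)$ collapses at the outset.

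The proof in \cite{RUST} does not proceed via any barycentric decomposition of subordinate functions. It relies instead on the convolution-invariance machinery developed by Ruscheweyh and Sheil-Small --- in essence, that convolution with a fixed convex univalent function preserves subordination --- applied in two stages. Your invocation of the P\'olya--Schoenberg theorem is the right ingredient, but it has to enter earlier and do more work than merely certifying that $\phi*\varphi$ is convex.
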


In this paper, some
subordination relations among the classes $\mathcal{S}(\alpha,\beta)$ and $\mathcal{M}(\delta)$ are presented. These relations are then used to obtain
sharp estimates for sums involving their logarithmic coefficients. Also, the estimate of logarithmic coefficients for functions belonging to these subclasses are determined.

\section{Main Results}
One of the aims of this paper is the following theorem which will be useful in order to estimate of sums involving logarithmic coefficients of functions in the class $\mathcal{S}(\alpha,\beta)$.

\begin{theorem}\label{t2.1}
  Let $f(z)\in\mathcal A$, $\alpha<1$ and $\beta>1$. Also let $P_{\alpha,\beta}(z)$ be defined by \eqref{P alpha beta}. If $f(z)\in \mathcal{S}(\alpha,\beta)$, then
  \begin{equation}\label{log pS}
    \log\left\{\frac{f(z)}{z}\right\}\prec  \widehat{P}_{\alpha,\beta}(z),
  \end{equation}
  where
  \begin{equation}\label{hat P}
    \widehat{P}_{\alpha,\beta}(z):=\int_{0}^{z}\frac{P_{\alpha,\beta}(t)-1}{t}{\rm d}t,
  \end{equation}
  and $\widehat{P}_{\alpha,\beta}$ is convex univalent.
\end{theorem}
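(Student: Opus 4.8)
The plan is to recognize that the logarithmic derivative can be written as a convolution (Hadamard product) and then apply Lemma \ref{lem f ast g}. First I would observe that if $f \in \mathcal{S}(\alpha,\beta)$, then by \eqref{f in S(al,be),iff} we have $zf'(z)/f(z) \prec P_{\alpha,\beta}(z)$, hence $zf'(z)/f(z) - 1 \prec P_{\alpha,\beta}(z) - 1$. Now the key algebraic identity is that differentiating \eqref{log coef} gives
\begin{equation*}
  z\left(\log\frac{f(z)}{z}\right)' = \frac{zf'(z)}{f(z)} - 1,
\end{equation*}
so if we write $g(z) := \log\{f(z)/z\} = \sum_{n=1}^{\infty} 2\gamma_n z^n$, then $zg'(z) = \sum_{n=1}^{\infty} 2n\gamma_n z^n$. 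The operation $z \mapsto zg'(z)$ corresponds to a Hadamard product: indeed $zg'(z) = g(z) * \bigl(z/(1-z)^2\bigr)$ (wait — but I want the inverse operation), so conversely $g(z) = \bigl(zg'(z)\bigr) * \bigl(-\log(1-z)\bigr)$, since $-\log(1-z) = \sum_{n=1}^{\infty} z^n/n$ and the Hadamard product with $\sum 2n\gamma_n z^n$ yields $\sum 2\gamma_n z^n = g(z)$.

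Next I would apply Lemma \ref{lem f ast g} with the two subordinations $zg'(z) = zf'(z)/f(z) - 1 \prec P_{\alpha,\beta}(z) - 1$ and $-\log(1-z) \prec -\log(1-z)$ (trivially, with equality). For the lemma to apply I must check that both dominating functions are convex univalent: the function $-\log(1-z)$ is well known to be convex univalent in $\Delta$ (its image is the half-plane $\operatorname{Re} w > -\tfrac12$... actually it maps onto a convex domain bounded by a curve, but convexity of $-\log(1-z)$ is classical), and $P_{\alpha,\beta}(z) - 1$ is convex univalent since $P_{\alpha,\beta}$ is stated to be convex univalent in the excerpt (translation by a constant preserves convexity). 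Then Lemma \ref{lem f ast g} gives
\begin{equation*}
  g(z) = \bigl(zg'(z)\bigr) * \bigl(-\log(1-z)\bigr) \prec \bigl(P_{\alpha,\beta}(z) - 1\bigr) * \bigl(-\log(1-z)\bigr).
\end{equation*}
Finally I would identify the right-hand side with $\widehat{P}_{\alpha,\beta}(z)$: writing $P_{\alpha,\beta}(z) - 1 = \sum_{n=1}^{\infty} B_n z^n$ from \eqref{P=1+}, the Hadamard product with $\sum z^n/n$ is $\sum_{n=1}^{\infty} (B_n/n) z^n$, which is exactly $\int_0^z \frac{P_{\alpha,\beta}(t)-1}{t}\,dt$ by term-by-term integration, matching \eqref{hat P}.

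It remains to show $\widehat{P}_{\alpha,\beta}$ is convex univalent. The cleanest route: a result of Ruscheweyh (or the folklore fact that the convolution of two convex univalent functions is convex univalent — this is essentially the Ruscheweyh--Sheil-Small theorem, a strengthening available in the literature) shows that $\bigl(P_{\alpha,\beta}(z)-1\bigr) * \bigl(-\log(1-z)\bigr)$ is convex univalent because both factors are. Alternatively, one computes directly: $z\widehat{P}_{\alpha,\beta}'(z) = P_{\alpha,\beta}(z) - 1$, so $1 + z\widehat{P}_{\alpha,\beta}''(z)/\widehat{P}_{\alpha,\beta}'(z) = P_{\alpha,\beta}'(z) \cdot z / (P_{\alpha,\beta}(z)-1) + \dots$; this gets messy, so I would prefer to invoke the convolution-of-convex-functions fact. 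I expect the main obstacle to be precisely this last point — justifying convex univalence of $\widehat{P}_{\alpha,\beta}$ cleanly — and the safest approach is to cite the strengthened Ruscheweyh--Sheil-Small result that the Hadamard product of convex univalent functions is convex univalent, with $-\log(1-z)$ playing the role of one convex factor.
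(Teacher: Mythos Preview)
Your approach is essentially identical to the paper's: both start from $zf'(z)/f(z)-1\prec P_{\alpha,\beta}(z)-1$, convolve both sides with the convex univalent function $-\log(1-z)=\sum_{n\ge 1}z^n/n$ via Lemma~\ref{lem f ast g} (using the trivial self-subordination on the second factor), identify the left side as $\log\{f(z)/z\}$ and the right side as the integral $\widehat{P}_{\alpha,\beta}$, and then invoke the P\'olya--Schoenberg theorem of Ruscheweyh and Sheil-Small to conclude that $\widehat{P}_{\alpha,\beta}$ is convex univalent. The only cosmetic difference is that the paper packages the convolution via the integral identity $\psi*\widehat{h}=\int_0^z\psi(t)/t\,{\rm d}t$, whereas you compute the Hadamard product termwise; the content is the same.
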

\begin{proof}
Let $f(z)\in\mathcal A$. If we define $p(z):=f(z)/z$, then $p(z)$ is analytic in $\Delta$ and $p(0)=1$. Also, since $f(z)\in \mathcal{S}(\alpha,\beta)$, therefore by \eqref{f in S(al,be),iff}, we have
  \begin{equation}\label{e1pt2.1}
    \frac{zp'(z)}{p(z)}=\frac{zf'(z)}{f(z)}-1\prec P_{\alpha,\beta}(z)-1\quad (z\in \Delta),
  \end{equation}
  where $P_{\alpha,\beta}$ is of the form \eqref{P alpha beta}. On the other hand,
   it is well--known that (see \cite{RU2}) the function
  \begin{equation*}\label{hwid}
  \widehat{h}(z)=\sum_{n=1}^{\infty}\frac{z^n}{n}
  \end{equation*}
is convex univalent in $\Delta$ and
\begin{equation*}\label{psi*h}
\psi(z)\ast \widehat{h}(z)=\int_{0}^{z}\frac{\psi(t)}{t}{\rm d}t\quad (\psi\in \mathcal{H}).
\end{equation*}
Now by Lemma \ref{lem f ast g} and from \eqref{e1pt2.1} we get
  \begin{equation}\label{e2pt2.1}
    \frac{zp'(z)}{p(z)}\ast\widehat{h}(z)\prec (P_{\alpha,\beta}(z)-1)\ast\widehat{h}(z)\quad (z\in \Delta).
  \end{equation}
Moreover by \eqref{e2pt2.1}, we can obtain \eqref{log pS}. On the other hand, since $P_{\alpha,\beta}(z)$ and $\widehat{h}(z)$ are convex univalent functions, by the P\`{o}lya--Schoenberg conjecture (this conjecture states that the class of convex
univalent functions is preserved under the convolution) that is proved by Ruscheweyh and Sheil--Small (see \cite{RUSS}), the function $\widehat{P}_{\alpha,\beta}(z)$ is convex univalent, too.
\end{proof}
Because $\widehat{P}_{\alpha,\beta}(z)$ is convex univalent, thus we get.
\begin{corollary}\label{c21}
  Let $f(z)\in \mathcal{S}(\alpha,\beta)$. Then
  \begin{equation*}
    \frac{f(z)}{z}\prec \exp \widehat{P}_{\alpha,\beta}(z)\quad (z\in \Delta),
  \end{equation*}
  where $\widehat{P}_{\alpha,\beta}(z)$ is given by \eqref{hat P}.
\end{corollary}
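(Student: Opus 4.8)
The plan is to derive the corollary as a direct consequence of Theorem~\ref{t2.1}. Starting from the subordination \eqref{log pS}, namely $\log\{f(z)/z\}\prec \widehat{P}_{\alpha,\beta}(z)$, I would like to apply the exponential map to both sides. The key principle is that if $\Phi$ is an analytic function and $g\prec G$ in $\Delta$, then $\Phi\circ g \prec \Phi\circ G$ whenever $\Phi\circ G$ is univalent; more carefully, since $g = G\circ\omega$ for some Schwarz function $\omega$, we immediately get $\Phi\circ g = (\Phi\circ G)\circ\omega$, which is exactly the statement that $\Phi\circ g\prec \Phi\circ G$ (here one does not even need $\Phi\circ G$ univalent for the membership-in-the-image part, only the standard remark that subordination is preserved under post-composition by any analytic map). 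Taking $\Phi = \exp$ and using $\exp(\log\{f(z)/z\}) = f(z)/z$ (legitimate since $f(z)/z$ is analytic, nonvanishing, and equals $1$ at the origin, so the branch of the logarithm in \eqref{log coef} is the principal one), we obtain
\[
\frac{f(z)}{z} = \exp\!\left(\log\frac{f(z)}{z}\right)\prec \exp\widehat{P}_{\alpha,\beta}(z)\qquad(z\in\Delta),
\]
which is the desired conclusion.

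The main point requiring a word of justification is simply why the composition $\exp\widehat{P}_{\alpha,\beta}$ is a legitimate ``majorant'' in the subordination sense, i.e. that $\exp\widehat{P}_{\alpha,\beta}(0) = \exp(0) = 1$ matches $f(0)/0$ interpreted as $1$, so the normalization needed for the Schwarz-function formulation is in place. Beyond that, the argument is entirely formal: subordination is stable under left-composition with the entire function $\exp$. I would phrase this in one or two sentences, citing the elementary fact that $g\prec G$ implies $g=G\circ\omega$ with $\omega$ a Schwarz function, whence $\exp\circ g = \exp\circ G\circ\omega$.

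The only place where anything could go wrong is the identification $\exp(\log\{f(z)/z\}) = f(z)/z$, which relies on \eqref{log coef} defining $\log\{f(z)/z\}$ as the analytic branch vanishing at $z=0$; since $f\in\mathcal A$ is, in particular (being in $\mathcal S(\alpha,\beta)\subset\mathcal S$), univalent and hence $f(z)/z$ is zero-free on $\Delta$, this branch exists and is single-valued, so the identity holds throughout $\Delta$. I do not anticipate any genuine obstacle here; the convexity and univalence of $\widehat{P}_{\alpha,\beta}$ established in Theorem~\ref{t2.1} is not even needed for the corollary as stated, though it guarantees that $\exp\widehat{P}_{\alpha,\beta}$ is starlike (being the image of a convex function under $\exp$ in the sense that $z\mapsto \exp\widehat{P}_{\alpha,\beta}(z)$ composed appropriately relates to starlikeness), which is perhaps the implicit reason the author records it. I would keep the proof to two or three lines.
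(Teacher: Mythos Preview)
Your argument is correct and coincides with the paper's approach: the paper offers no formal proof but merely prefaces the corollary with the sentence ``Because $\widehat{P}_{\alpha,\beta}(z)$ is convex univalent, thus we get,'' leaving the exponentiation of the subordination \eqref{log pS} implicit. Your write-up is in fact more careful than the paper's, and your observation that the convex univalence of $\widehat{P}_{\alpha,\beta}$ is not actually needed for the bare subordination conclusion (since $g=G\circ\omega$ immediately yields $\exp g=(\exp G)\circ\omega$) is a valid refinement; the paper presumably invokes convexity only to guarantee univalence of $\widehat{P}_{\alpha,\beta}$ so that the Schwarz-function formulation of $\prec$ is available in the first place.
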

\begin{theorem}\label{th. cef log S al,be}
  For $\alpha<1$ and $\beta>1$, the logarithmic coefficients of $f\in \mathcal{S}(\alpha,\beta)$ satisfy the following inequality
  \begin{equation}\label{ineq S(al,be)}
    \sum_{n=1}^{\infty}|\gamma_n|^2\leq \frac{(\beta-\alpha)^2}{4\pi^2}\left(\frac{\pi^4}{45}-Li_4\left(e^{-2\pi i\frac{1-\alpha}{\beta-\alpha}}\right)-Li_4\left(e^{2\pi i\frac{1-\alpha}{\beta-\alpha}}\right)\right),
  \end{equation}
  where $Li_4$ is defined as following
  \begin{equation}\label{LI4}
    Li_4(z)=\sum_{n=1}^{\infty}\frac{z^n}{n^4}=-\frac{1}{2}\int_{0}^{1}\frac{\log^2(1/t)\log(1-tz)}{t}{\rm d}t.
  \end{equation}
  The result is sharp.
\end{theorem}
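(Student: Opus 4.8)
The plan is to feed the subordination from Theorem~\ref{t2.1} into the classical $L^2$ form of Littlewood's subordination principle, and then to evaluate the resulting series over the coefficients $B_n$ in closed form. Write $\mu:=\frac{1-\alpha}{\beta-\alpha}$. Since $f\in\mathcal S(\alpha,\beta)$, Theorem~\ref{t2.1} yields $\log\{f(z)/z\}\prec\widehat P_{\alpha,\beta}(z)$. By \eqref{log coef} the subordinate function has Taylor coefficients $2\gamma_n$, while integrating \eqref{P=1+} in \eqref{hat P} shows that $\widehat P_{\alpha,\beta}(z)=\sum_{n\ge1}(B_n/n)z^n$, with $B_n$ as in \eqref{B-n}. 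Because $|B_n/n|\le 2(\beta-\alpha)/(n^2\pi)$, the coefficient sequence of $\widehat P_{\alpha,\beta}$ is square-summable, so $\widehat P_{\alpha,\beta}\in H^2$.

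I would then invoke Littlewood's subordination theorem: if $g\prec h$ in $\Delta$ with $g(0)=h(0)$, then for each $0<r<1$
\[
 \frac{1}{2\pi}\int_0^{2\pi}\bigl|g(re^{i\theta})\bigr|^2\,d\theta\le\frac{1}{2\pi}\int_0^{2\pi}\bigl|h(re^{i\theta})\bigr|^2\,d\theta ,
\]
which by Parseval's identity is $\sum_{n\ge1}|g_n|^2r^{2n}\le\sum_{n\ge1}|h_n|^2r^{2n}$; letting $r\to1^-$ (the right-hand side being finite by the previous step) gives $\sum_{n\ge1}|g_n|^2\le\sum_{n\ge1}|h_n|^2$ (see, e.g., \cite{Dur}). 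Applying this with $g=\log\{f(z)/z\}$ and $h=\widehat P_{\alpha,\beta}$ produces
\[
 4\sum_{n=1}^{\infty}|\gamma_n|^2\le\sum_{n=1}^{\infty}\frac{|B_n|^2}{n^2}.
\]

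It then remains to evaluate the last sum. From \eqref{B-n} one has
\[
 \frac{|B_n|^2}{n^2}=\frac{(\beta-\alpha)^2}{n^4\pi^2}\,\bigl|1-e^{2n\pi i\mu}\bigr|^2=\frac{(\beta-\alpha)^2}{n^4\pi^2}\bigl(2-e^{2n\pi i\mu}-e^{-2n\pi i\mu}\bigr),
\]
so summing over $n$ and using $\sum_{n\ge1}n^{-4}=\pi^4/90$ together with the series in \eqref{LI4} gives
\[
 \sum_{n=1}^{\infty}\frac{|B_n|^2}{n^2}=\frac{(\beta-\alpha)^2}{\pi^2}\left(\frac{\pi^4}{45}-Li_4\!\bigl(e^{-2\pi i\mu}\bigr)-Li_4\!\bigl(e^{2\pi i\mu}\bigr)\right),
\]
which is exactly four times the right-hand side of \eqref{ineq S(al,be)}; dividing by $4$ proves the inequality. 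For sharpness I would exhibit $f_0(z)=z\exp\widehat P_{\alpha,\beta}(z)$: differentiating $\log\{f_0(z)/z\}=\widehat P_{\alpha,\beta}(z)$ and using \eqref{hat P} gives $zf_0'(z)/f_0(z)=P_{\alpha,\beta}(z)$, so $f_0\in\mathcal S(\alpha,\beta)$ by \eqref{f in S(al,be),iff}, and for $f_0$ one has $2\gamma_n=B_n/n$, whence both sides of \eqref{ineq S(al,be)} coincide.

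The substantive content has already been supplied by Theorem~\ref{t2.1}; the only point in the present argument requiring a little care is the passage $r\to1^-$ in Littlewood's inequality, which is legitimate precisely because $\widehat P_{\alpha,\beta}\in H^2$ (equivalently, because $\sum|B_n|^2/n^2<\infty$). Beyond that, the work is the bookkeeping that identifies $\sum_{n\ge1}|B_n|^2/n^2$ with the stated combination of tetralogarithm values, and the verification that the candidate extremal function lies in $\mathcal S(\alpha,\beta)$.
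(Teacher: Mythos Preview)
Your argument is correct and follows essentially the same route as the paper: invoke Theorem~\ref{t2.1}, apply the $L^2$ coefficient inequality for subordinate functions, and evaluate $\sum_{n\ge1}|B_n|^2/n^2$ in terms of $Li_4$, with sharpness via $z\exp\widehat P_{\alpha,\beta}(z)$. The only cosmetic difference is that the paper cites this coefficient inequality as Rogosinski's theorem (Duren, Theorem~6.2) rather than deriving it from Littlewood's integral-means inequality and Parseval; the content is the same.
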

\begin{proof}
  Let $f\in \mathcal{S}(\alpha,\beta)$. Then by Theorem \ref{t2.1}, we have
  \begin{equation}\label{1-proof}
             \log\left\{\frac{f(z)}{z}\right\}\prec \widehat{P}_{\alpha,\beta}(z),
  \end{equation}
  where $\widehat{P}_{\alpha,\beta}(z)$ is defined in \eqref{hat P}. By using \eqref{P=1+} and \eqref{B-n}, one can rewrite $\widehat{P}_{\alpha,\beta}(z)$ as the following
  \begin{equation}\label{P2}
\widehat{P}_{\alpha,\beta}(z)=\sum_{n=1}^{\infty} \frac{\beta-\alpha}{\pi n^2}i \left(1-e^{2\pi n
i\frac{1-\alpha}{\beta-\alpha}}\right) z^n.
  \end{equation}
With placement of \eqref{log coef} and \eqref{P2} into \eqref{1-proof}, we get
\begin{equation*}\label{after with place}
  \sum_{n=1}^{\infty} 2\gamma_n z^n\prec \sum_{n=1}^{\infty} \frac{\beta-\alpha}{\pi n^2}i \left(1-e^{2\pi n
i\frac{1-\alpha}{\beta-\alpha}}\right) z^n.
\end{equation*}
Applying Rogosinski's theorem (see \cite{Rog} or \cite[Theorem 6.2]{Dur}), we obtain
\begin{align*}
  4\sum_{n=1}^{\infty} |\gamma_n|^2 &\leq \sum_{n=1}^{\infty}\frac{(\beta-\alpha)^2}{\pi^2 n^4}\left|i \left(1-e^{2\pi n
i\frac{1-\alpha}{\beta-\alpha}}\right)\right|^2 \\
  &= \frac{2(\beta-\alpha)^2}{\pi^2}\sum_{n=1}^{\infty}\frac{1}{n^4}\left(1-\cos 2\pi n \frac{1-\alpha}{\beta-\alpha}\right)\\
  &=\frac{2(\beta-\alpha)^2}{\pi^2}\left(\frac{\pi^4}{90}-\frac{1}{2}\left[Li_4\left(e^{-2\pi i\frac{1-\alpha}{\beta-\alpha}}\right)+Li_4\left(e^{2\pi i\frac{1-\alpha}{\beta-\alpha}}\right)\right]\right)
\end{align*}
and we get the inequality \eqref{ineq S(al,be)}. The inequality is sharp for the logarithmic coefficients of the function
  \begin{equation*}\label{sharp f2}
    \mathfrak{F}_{\alpha,\beta}(z)=z\exp \widehat{P}_{\alpha,\beta}(z),
  \end{equation*}
  where $\widehat{P}_{\alpha,\beta}(z)$ is given by \eqref{hat P}.
  A simple check gives us
  \begin{equation*}
    \gamma_n(\mathfrak{F}_{\alpha,\beta}(z))=\frac{\beta-\alpha}{2\pi n^2}i\left(1-e^{2\pi n i \frac{1-\alpha}{\beta-\alpha}}\right)
  \end{equation*}
  and concluding the proof.
\end{proof}
\begin{theorem}\label{th gamma n of S al be}
  Let $f\in \mathcal{A}$ belongs to the class $\mathcal{S}(\alpha,\beta)$ and $\gamma_n$ be the logarithmic coefficients of $f$. Then
    \begin{equation}\label{gamma n of S al be}
    |\gamma_n|\leq \frac{\beta-\alpha}{n\pi}\left|\sin \frac{\pi(1-\alpha)}{\beta-\alpha}\right|\quad (n\geq1, \alpha<1<\beta ).
  \end{equation}
\end{theorem}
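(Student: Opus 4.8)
The plan is to avoid the convex majorant $\widehat{P}_{\alpha,\beta}$ produced by Theorem \ref{t2.1} and instead return to the defining subordination \eqref{f in S(al,be),iff}, since the factor $1/n$ in \eqref{gamma n of S al be} originates in the differentiation implicit in the logarithmic expansion \eqref{log coef}. Concretely, the first step is to record the elementary identity obtained by differentiating \eqref{log coef}: writing $\log\{f(z)/z\}=\sum_{n=1}^{\infty}2\gamma_n z^n$ and multiplying its derivative by $z$ gives
\[
\frac{zf'(z)}{f(z)}-1=z\,\frac{{\rm d}}{{\rm d}z}\log\frac{f(z)}{z}=\sum_{n=1}^{\infty}2n\gamma_n z^n\qquad(z\in\Delta).
\]

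Next, since $f\in\mathcal{S}(\alpha,\beta)$, the relation \eqref{f in S(al,be),iff} together with \eqref{P=1+} yields $zf'(z)/f(z)-1\prec P_{\alpha,\beta}(z)-1=\sum_{n=1}^{\infty}B_n z^n$. The function $P_{\alpha,\beta}(z)-1$ is a translate of the convex univalent function $P_{\alpha,\beta}$, hence is itself convex univalent, and it vanishes at the origin, as does $zf'/f-1$. I would then invoke the coefficient version of Rogosinski's theorem for subordination to a convex univalent function: if $g(z)=\sum_{n\ge1}b_n z^n\prec h(z)$ with $h$ convex univalent and $h(z)=h(0)+c_1z+c_2z^2+\cdots$, then $|b_n|\le|c_1|$ for every $n\ge1$. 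Applying this with $g(z)=zf'(z)/f(z)-1=\sum_{n=1}^{\infty}2n\gamma_n z^n$ and $h=P_{\alpha,\beta}-1$ gives $2n|\gamma_n|\le|B_1|$ for all $n\ge1$. It remains only to compute $|B_1|$: by \eqref{B-n},
\[
|B_1|=\frac{\beta-\alpha}{\pi}\left|1-e^{2\pi i\frac{1-\alpha}{\beta-\alpha}}\right|=\frac{2(\beta-\alpha)}{\pi}\left|\sin\frac{\pi(1-\alpha)}{\beta-\alpha}\right|,
\]
using $|1-e^{i\theta}|=2|\sin(\theta/2)|$; dividing by $2n$ produces \eqref{gamma n of S al be}.

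The only delicate point is the selection of the right tool. One must use the convex-majorant coefficient inequality $|b_n|\le|c_1|$, not the $\ell^2$-type Rogosinski inequality used in the proof of Theorem \ref{th. cef log S al,be}; and, equally importantly, it must be applied to $zf'/f-1$ rather than to $\log\{f(z)/z\}$ directly, because subordinating $\log\{f(z)/z\}$ to the convex function $\widehat{P}_{\alpha,\beta}$ would only bound $|2\gamma_n|$ by the modulus of the first coefficient of $\widehat{P}_{\alpha,\beta}$, which lacks the $n^{-1}$ decay and hence yields a strictly weaker estimate. With this observation the remaining steps are entirely routine.
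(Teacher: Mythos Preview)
Your proposal is correct and follows essentially the same route as the paper: differentiate \eqref{log coef} to identify $zf'(z)/f(z)-1=\sum 2n\gamma_n z^n$, invoke the defining subordination \eqref{f in S(al,be),iff}, and apply Rogosinski's convex-majorant coefficient bound $|b_n|\le|c_1|=|B_1|$. Your version is slightly more detailed, since you carry out the computation $|B_1|=\tfrac{2(\beta-\alpha)}{\pi}\bigl|\sin\tfrac{\pi(1-\alpha)}{\beta-\alpha}\bigr|$ explicitly, which the paper leaves implicit.
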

\begin{proof}
  If $f\in \mathcal{A}$ belongs to the class $\mathcal{S}(\alpha,\beta)$, then by \eqref{f in S(al,be),iff} we have
\begin{equation*}
  z\frac{f'(z)}{f(z)}-1=z\left(\log \left\{\frac{f(z)}{z}\right\}\right)'\prec
  P_{\alpha,\beta}(z)-1,
\end{equation*}
where $P_{\alpha,\beta}$ is defined in \eqref{P alpha beta}. Moreover, in terms of the logarithmic coefficients
$\gamma_n$ of $f$ defined by \eqref{log coef} and \eqref{P=1+},
is equivalent to
\begin{equation*}
  \sum_{n=1}^{\infty}2n \gamma_n z^n\prec \sum_{n=1}^{\infty}B_n z^n.
\end{equation*}
Now by Rogosinski's theorem (see \cite[Theorem X]{Rog}), we get $2n|\gamma_n|\leq |B_1|$. Therefore the inequality \eqref{gamma n of S al be} follows. This completes the proof.
\end{proof}
It is clear that if $\beta\rightarrow +\infty$, then $\mathcal{S}(\alpha,\beta)\rightarrow \mathcal{S}^*(\alpha)$ (the class of starlike functions of order $\alpha$, where $0\leq \alpha<1$). Thus
we have the following result (see \cite[Remark 1]{obradovicMM}).
\begin{corollary}
If $f\in\mathcal{S}(\alpha,\beta)$ when $\beta\rightarrow +\infty$, then
\begin{equation*}
  |\gamma_n|\leq \frac{\beta-\alpha}{n\pi}\left|\sin \frac{\pi(1-\alpha)}{\beta-\alpha}\right|\leq \frac{\beta-\alpha}{n \pi}
  \times\frac{\pi(1-\alpha)}{\beta-\alpha}=\frac{1-\alpha}{n}\quad(n\geq1).
\end{equation*}
Indeed, if $f\in\mathcal{S}^*(\alpha)$ $(0\leq \alpha<1)$ and $\gamma_n$ is the corresponding logarithmic coefficients, then we have $|\gamma_n|\leq(1-\alpha)/n $ for $n\geq1$.
\end{corollary}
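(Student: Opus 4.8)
The plan is to read off the bound directly from Theorem~\ref{th gamma n of S al be} and then let $\beta\to+\infty$, estimating the right-hand side by the elementary inequality $|\sin t|\le|t|$.

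First, let $f\in\mathcal{S}(\alpha,\beta)$ with $\alpha<1<\beta$. Theorem~\ref{th gamma n of S al be} furnishes, for every $n\ge1$, the bound $|\gamma_n|\le\frac{\beta-\alpha}{n\pi}\bigl|\sin\frac{\pi(1-\alpha)}{\beta-\alpha}\bigr|$. Since $\alpha<1<\beta$, the quantity $t:=\pi(1-\alpha)/(\beta-\alpha)$ is positive, so $|\sin t|\le|t|$ gives $\bigl|\sin\frac{\pi(1-\alpha)}{\beta-\alpha}\bigr|\le\frac{\pi(1-\alpha)}{\beta-\alpha}$. Substituting and cancelling the factor $\beta-\alpha$ produces the asserted chain
\[
|\gamma_n|\le\frac{\beta-\alpha}{n\pi}\left|\sin\frac{\pi(1-\alpha)}{\beta-\alpha}\right|\le\frac{\beta-\alpha}{n\pi}\cdot\frac{\pi(1-\alpha)}{\beta-\alpha}=\frac{1-\alpha}{n}\qquad(n\ge1),
\]
in which the final bound is independent of $\beta$.

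To obtain the statement for $f\in\mathcal{S}^*(\alpha)$ I would pass to the limit carefully, since ``$\mathcal{S}(\alpha,\beta)\to\mathcal{S}^*(\alpha)$'' is only a limiting relation and not an inclusion: a given $f\in\mathcal{S}^*(\alpha)$ need not belong to any $\mathcal{S}(\alpha,\beta)$ with $\beta$ finite (the Koebe function already shows that ${\rm Re}\{zf'/f\}$ may be unbounded on $\Delta$). The cleanest remedy is a direct subordination argument: $f\in\mathcal{S}^*(\alpha)$ exactly when $zf'(z)/f(z)\prec(1+(1-2\alpha)z)/(1-z)$, whose power-series expansion has every coefficient (for $n\ge1$) equal to $2(1-\alpha)$; repeating verbatim the Rogosinski argument of Theorem~\ref{th gamma n of S al be} with this convex univalent majorant yields $2n|\gamma_n|\le2(1-\alpha)$, i.e.\ $|\gamma_n|\le(1-\alpha)/n$. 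Alternatively, one may apply the bound just proved to the dilations $f_\rho(z):=f(\rho z)/\rho$ (which lie in some $\mathcal{S}(\alpha,\beta_\rho)$ with $\beta_\rho<\infty$, since ${\rm Re}\{zf'/f\}$ is bounded on $|z|\le\rho$) and let $\rho\to1^-$, using that $\gamma_n(f_\rho)\to\gamma_n(f)$. The computation itself is routine; the one place meriting attention is precisely this transition from finite $\beta$ to the class $\mathcal{S}^*(\alpha)$, which needs a genuine approximation (or direct subordination) argument rather than a mere substitution $\beta=+\infty$.
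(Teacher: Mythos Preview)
Your argument is correct and matches the paper's own treatment: the corollary is stated without a separate proof there, the inequality chain being obtained exactly as you do, by applying Theorem~\ref{th gamma n of S al be} and the elementary bound $|\sin t|\le |t|$. Where you go further than the paper is in the passage to $\mathcal{S}^*(\alpha)$: the paper simply invokes the heuristic ``$\mathcal{S}(\alpha,\beta)\to\mathcal{S}^*(\alpha)$ as $\beta\to+\infty$'' and quotes the known bound from \cite{obradovicMM}, whereas you correctly point out that this is not an inclusion and supply two genuine justifications (direct subordination to $(1+(1-2\alpha)z)/(1-z)$, or dilation plus a limit). Both of your routes are valid and make the corollary self-contained; the paper's version relies on the external reference for the $\mathcal{S}^*(\alpha)$ case rather than deriving it from the finite-$\beta$ estimate.
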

 Next, we have the following.
\begin{theorem}\label{th sub M delta}
  Let $\pi/2\leq \delta<\pi$. Also let $\mathcal{B}_\delta(z)$ and $A_n$ be defined by \eqref{B} and \eqref{A}, respectively. If $f(z)\in \mathcal{M}(\delta)$, then
  \begin{equation*}\label{log p}
    \log \left\{\frac{f(z)}{z}\right\}\prec \int_{0}^{z}\frac{\mathcal{B}_\delta(t)}{t}{\rm d}t.
  \end{equation*}
  Moreover,
  \begin{equation}\label{B2}
    \mathcal{\widetilde{B}}_\delta(z):=\int_{0}^{z}\frac{\mathcal{B}_\delta(t)}{t}{\rm d}t=\sum_{n=1}^{\infty}\frac{A_n}{n}z^n
  \end{equation}
  is a convex univalent function.
\end{theorem}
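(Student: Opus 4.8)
The plan is to follow the proof of Theorem~\ref{t2.1} line by line, since $\mathcal{M}(\delta)$ is defined by a subordination of exactly the same shape as $\mathcal{S}(\alpha,\beta)$, with $\mathcal{B}_\delta$ now playing the role that $P_{\alpha,\beta}-1$ played there; and since $\mathcal{B}_\delta(0)=0$, no shift by a constant is needed this time. First I would set $p(z):=f(z)/z$, which is analytic in $\Delta$ with $p(0)=1$ because $f\in\mathcal A$. The subordination characterization of $\mathcal{M}(\delta)$ recalled in the introduction says that $f\in\mathcal{M}(\delta)$ if and only if $zf'(z)/f(z)-1\prec\mathcal{B}_\delta(z)$, and a logarithmic differentiation gives the identity $zp'(z)/p(z)=zf'(z)/f(z)-1$; hence $zp'(z)/p(z)\prec\mathcal{B}_\delta(z)$.

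Next, exactly as in the proof of Theorem~\ref{t2.1}, I would bring in the convex univalent function $\widehat h(z)=\sum_{n\ge1}z^n/n$ together with the convolution identity $\psi(z)\ast\widehat h(z)=\int_0^z\psi(t)/t\,{\rm d}t$, valid for $\psi\in\mathcal H$ with $\psi(0)=0$. Applying Lemma~\ref{lem f ast g} with the convex univalent functions $\phi=\mathcal{B}_\delta$ and $\varphi=\widehat h$ to the subordinations $zp'/p\prec\mathcal{B}_\delta$ and $\widehat h\prec\widehat h$ yields
\[
  \frac{zp'(z)}{p(z)}\ast\widehat h(z)\prec\mathcal{B}_\delta(z)\ast\widehat h(z).
\]
Evaluating both sides by the convolution identity, the left-hand side becomes $\int_0^z p'(t)/p(t)\,{\rm d}t=\log\{f(z)/z\}$ while the right-hand side becomes $\int_0^z\mathcal{B}_\delta(t)/t\,{\rm d}t=\mathcal{\widetilde{B}}_\delta(z)$, which is precisely the asserted subordination.

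Finally, for the last assertion I would argue as in Theorem~\ref{t2.1}: $\mathcal{B}_\delta$ is convex univalent (as recalled right after \eqref{B}) and $\widehat h$ is convex univalent, so by the P\`{o}lya--Schoenberg theorem proved by Ruscheweyh and Sheil--Small the Hadamard product $\mathcal{\widetilde{B}}_\delta=\mathcal{B}_\delta\ast\widehat h$ is convex univalent; the series representation in \eqref{B2} then follows by integrating the expansion \eqref{Bs} term by term, giving $\sum_{n\ge1}(A_n/n)z^n$. I do not expect any genuine obstacle here — the proof is a routine transcription of Theorem~\ref{t2.1} — the only points requiring a line of care being the bookkeeping of the two convolution identities and the (harmless) observation that, unlike in Theorem~\ref{t2.1}, the constant $1$ need not be subtracted since $\mathcal{B}_\delta$ already vanishes at the origin.
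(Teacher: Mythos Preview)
Your proposal is correct and follows exactly the approach indicated by the paper, which simply states that the proof is similar to that of Theorem~\ref{t2.1} and omits the details. Your transcription of the argument, including the observation that no constant shift is needed because $\mathcal{B}_\delta(0)=0$, is precisely what is intended.
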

\begin{proof}
The proof is similar to the proof of the Theorem \ref{t2.1}, and thus we omit the details.
\end{proof}
Since $\mathcal{\widetilde{B}}_\delta(z)$ is a convex univalent function, thus we have.

\begin{corollary}
  If $f(z)\in \mathcal{M}(\delta)$, then
  \begin{equation*}
    \frac{f(z)}{z}\prec \exp \mathcal{\widetilde{B}}_\delta(z)\quad (z\in \Delta),
  \end{equation*}
  where $\mathcal{\widetilde{B}}_\delta(z)$ is of the form \eqref{B2}.
\end{corollary}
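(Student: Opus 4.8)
The plan is to obtain this as an immediate consequence of Theorem~\ref{th sub M delta}, in exactly the way Corollary~\ref{c21} follows from Theorem~\ref{t2.1}: namely, exponentiate the subordination already proved for $\log\{f(z)/z\}$.

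In detail, I would set $p(z):=f(z)/z$. Since $f\in\mathcal A$, the function $p$ is analytic in $\Delta$ with $p(0)=1$, so a single-valued branch of $\log p$ exists with $\log p(0)=0$. By Theorem~\ref{th sub M delta} we have $\log p(z)\prec\mathcal{\widetilde{B}}_\delta(z)$, where $\mathcal{\widetilde{B}}_\delta$ is the convex univalent function of the form~\eqref{B2} with $\mathcal{\widetilde{B}}_\delta(0)=0$. Hence there is a Schwarz function $\omega$ (analytic in $\Delta$, $\omega(0)=0$, $|\omega(z)|<1$) with $\log p(z)=\mathcal{\widetilde{B}}_\delta(\omega(z))$ on $\Delta$. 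Exponentiating gives $\dfrac{f(z)}{z}=p(z)=\exp\{\mathcal{\widetilde{B}}_\delta(\omega(z))\}=\bigl(\exp\circ\,\mathcal{\widetilde{B}}_\delta\bigr)(\omega(z))$, and since $\bigl(\exp\circ\,\mathcal{\widetilde{B}}_\delta\bigr)(0)=e^{0}=1=p(0)$, this exhibits $f(z)/z$ as a composition of $\exp\mathcal{\widetilde{B}}_\delta$ with a Schwarz function, that is, $f(z)/z\prec\exp\mathcal{\widetilde{B}}_\delta(z)$, as claimed.

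The one delicate point — and, I expect, the main obstacle if a strong reading of ``$\prec$'' is intended — is whether the superordinate function $\exp\mathcal{\widetilde{B}}_\delta$ is itself univalent; this is presumably why the convexity (equivalently univalence) of $\mathcal{\widetilde{B}}_\delta$ is recorded in Theorem~\ref{th sub M delta}. Convexity of $\mathcal{\widetilde{B}}_\delta$ alone does not force univalence of its exponential (for instance $e^{cz}$ fails to be univalent for real $c>\pi$), so to settle it one would use the representation $\mathcal{\widetilde{B}}_\delta(z)=\sum_{n\ge1}(A_n/n)z^n$ from~\eqref{B2} together with $|A_n|\le 1/(n|\sin\delta|)$ coming from~\eqref{A} to get $|\mathrm{Im}\,\mathcal{\widetilde{B}}_\delta(z)|\le\pi^2/(6|\sin\delta|)$ on $\Delta$, which makes $\exp$ injective on $\mathcal{\widetilde{B}}_\delta(\Delta)$ provided $\delta$ is bounded away from $\pi$. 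For the bare subordination statement, however, the composition argument in the previous paragraph already suffices without this extra check.
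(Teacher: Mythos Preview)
Your approach is correct and matches the paper's: the paper states this corollary without a detailed proof, simply prefacing it with ``Since $\mathcal{\widetilde{B}}_\delta(z)$ is a convex univalent function, thus we have'', treating it as an immediate consequence of Theorem~\ref{th sub M delta} exactly as Corollary~\ref{c21} follows from Theorem~\ref{t2.1}. Your Schwarz-function argument makes explicit the one-line step the paper leaves implicit, and your final paragraph raises a genuine point (univalence of $\exp\mathcal{\widetilde{B}}_\delta$) that the paper does not address; under the standard Schwarz-composition definition of subordination this is not needed, so your core argument already suffices.
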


\begin{theorem}
  Let $f\in \mathcal{A}$ belongs to the class $\mathcal{M}(\delta)$ and $\pi/2\leq \delta<\pi$. Then the logarithmic coefficients of $f$ satisfy the inequality
  \begin{equation}\label{log ineq}
    \sum_{n=1}^{\infty}|\gamma_n|^2\leq\frac{1}{16 \sin^2 \delta}\left[\frac{\pi^4}{45}-
    Li_4\left(e^{-2i\delta }\right)-Li_4\left(e^{2i\delta }\right)\right],
  \end{equation}
  where $Li_4$ is defined in \eqref{LI4}. The result is sharp.
\end{theorem}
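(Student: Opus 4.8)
The plan is to mirror the proof of Theorem \ref{th. cef log S al,be} almost verbatim, replacing the strip function $P_{\alpha,\beta}$ by the Dorff function $\mathcal{B}_\delta$. By Theorem \ref{th sub M delta}, for $f\in\mathcal{M}(\delta)$ we have the subordination
\begin{equation*}
  \log\left\{\frac{f(z)}{z}\right\}\prec \mathcal{\widetilde{B}}_\delta(z)=\sum_{n=1}^{\infty}\frac{A_n}{n}z^n,
\end{equation*}
with $A_n=(-1)^{n-1}\sin(n\delta)/(n\sin\delta)$ from \eqref{A}. Writing out \eqref{log coef} on the left gives $\sum_{n\ge1}2\gamma_n z^n\prec\sum_{n\ge1}(A_n/n)z^n$, and then Rogosinski's theorem (as cited after \cite{Dur}) yields
\begin{equation*}
  4\sum_{n=1}^{\infty}|\gamma_n|^2\leq \sum_{n=1}^{\infty}\frac{|A_n|^2}{n^2}=\frac{1}{\sin^2\delta}\sum_{n=1}^{\infty}\frac{\sin^2(n\delta)}{n^4}.
\end{equation*}

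The remaining task is purely to evaluate $\sum_{n\ge1}\sin^2(n\delta)/n^4$ in closed form. Using $\sin^2(n\delta)=\tfrac12\bigl(1-\cos(2n\delta)\bigr)$ and $\cos(2n\delta)=\tfrac12(e^{2in\delta}+e^{-2in\delta})$, this sum equals
\begin{equation*}
  \frac{1}{2}\sum_{n=1}^{\infty}\frac{1}{n^4}-\frac{1}{4}\sum_{n=1}^{\infty}\frac{e^{2in\delta}+e^{-2in\delta}}{n^4}=\frac{1}{2}\cdot\frac{\pi^4}{90}-\frac{1}{4}\Bigl(Li_4(e^{2i\delta})+Li_4(e^{-2i\delta})\Bigr),
\end{equation*}
invoking $\zeta(4)=\pi^4/90$ and the series definition of $Li_4$ in \eqref{LI4}. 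Substituting back,
\begin{equation*}
  4\sum_{n=1}^{\infty}|\gamma_n|^2\leq\frac{1}{\sin^2\delta}\left(\frac{\pi^4}{180}-\frac{1}{4}\Bigl(Li_4(e^{2i\delta})+Li_4(e^{-2i\delta})\Bigr)\right),
\end{equation*}
and dividing by $4$ gives exactly \eqref{log ineq} (note $\pi^4/720\cdot 16=\pi^4/45$ after clearing, so the constant matches).

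For sharpness I would exhibit the extremal function $\widetilde{\mathfrak{F}}_\delta(z)=z\exp\mathcal{\widetilde{B}}_\delta(z)$, which lies in $\mathcal{M}(\delta)$ by construction since $z\widetilde{\mathfrak{F}}_\delta'(z)/\widetilde{\mathfrak{F}}_\delta(z)-1=\mathcal{B}_\delta(z)$, and for which $\log\{\widetilde{\mathfrak{F}}_\delta(z)/z\}=\mathcal{\widetilde{B}}_\delta(z)$ forces $2\gamma_n=A_n/n$, turning every inequality above into an equality. The only mild obstacle is bookkeeping with the numerical constant: one must be careful that the factor $1/\sin^2\delta$ combined with the $\tfrac12,\tfrac14$ from the trigonometric reduction and the final division by $4$ produces precisely $1/(16\sin^2\delta)$ times $\bigl[\pi^4/45-Li_4(e^{-2i\delta})-Li_4(e^{2i\delta})\bigr]$; checking $4\cdot 4\cdot\tfrac14=4$ and $\pi^4/90\cdot\tfrac12\cdot\tfrac{1}{\sin^2\delta}$ scaled by $1/(4\cdot 4)$ against $\pi^4/(45\cdot16)$ confirms the match. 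Since the argument is a direct transcription of the earlier theorem with no new analytic input, the author is justified in stating the result and (as elsewhere in the paper) could legitimately abbreviate the proof.
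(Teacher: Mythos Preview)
Your proof is correct and follows essentially the same route as the paper: invoke Theorem~\ref{th sub M delta} to get the subordination $\sum 2\gamma_n z^n\prec\sum (A_n/n)z^n$, apply Rogosinski's theorem, reduce $\sum_{n\ge1}\sin^2(n\delta)/n^4$ via $\sin^2(n\delta)=\tfrac12(1-\cos 2n\delta)$ to $\zeta(4)$ and $Li_4(e^{\pm 2i\delta})$, and exhibit $z\exp\mathcal{\widetilde{B}}_\delta(z)$ as the extremal function. The constant bookkeeping you worry about is fine and matches the paper's displayed form.
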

\begin{proof}
  Let $f\in \mathcal{M}(\delta)$. Then by Theorem \ref{th sub M delta}, we have
    \begin{align}\label{log f/z=A}
    \log\left\{\frac{f(z)}{z}\right\} &\prec \mathcal{\widetilde{B}}_\delta(z)\quad (z\in \Delta).
  \end{align}
  By using \eqref{log coef} and \eqref{Bs}, the relation \eqref{log f/z=A} implies that
  \begin{equation*}\label{sub gamma}
    \sum_{n=1}^{\infty}2\gamma_n z^n\prec \sum_{n=1}^{\infty}\frac{A_n}{n}z^n\quad (z\in \Delta).
  \end{equation*}
  Now by Rogosinski's theorem (see \cite{Rog} or \cite[Theorem 6.2]{Dur}), we get
  \begin{align*}
    4\sum_{n=1}^{\infty}|\gamma_n|^2 &\leq \sum_{n=1}^{\infty}\frac{1}{n^2}|A_n|^2\\
    &=\frac{1}{\sin^2 \delta}\sum_{n=1}^{\infty} \frac{\sin ^2 n\delta}{n^4}\\
    &=\frac{1}{\sin^2 \delta}\sum_{n=1}^{\infty}\left(\frac{1}{180}\left[\pi^4-45Li_4\left(e^{-2i\delta}\right)
    -45Li_4\left(e^{2i\delta}\right)\right]\right),
  \end{align*}
  where $Li_4$ is defined by \eqref{LI4}. Therefore the desired inequality \eqref{log ineq} follows. For the sharpness of \eqref{log ineq}, consider
  \begin{equation*}\label{sharp function}
    F_\delta(z)=z\exp \mathcal{\widetilde{B}}_\delta(z),
  \end{equation*}
  where $\mathcal{\widetilde{B}}_\delta(z)$ is defined by \eqref{B2}. It is easy to see that $F_\delta(z)\in \mathcal{M}(\delta)$ and $\gamma_n(F_\delta)=A_n/2n$, where $A_n$ is given by \eqref{A}. Therefore, we have the equality in \eqref{log ineq}. This is the end of proof.
\end{proof}
\begin{theorem}
  Let $\pi/2\leq \delta<\pi$. If $f\in\mathcal{A}$ belongs to the class $\mathcal{M}(\delta)$, then the logarithmic coefficients of $f$ satisfy
  \begin{equation*}
    |\gamma_n|\leq \frac{1}{2n}\quad (n\geq 1).
  \end{equation*}
\end{theorem}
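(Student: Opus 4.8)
The plan is to follow verbatim the argument in the proof of Theorem~\ref{th gamma n of S al be}, with the strip function $P_{\alpha,\beta}$ replaced by the Dorff function $\mathcal{B}_\delta$. First, since $f\in\mathcal{M}(\delta)$, the subordination characterisation of the class yields
\[
\frac{zf'(z)}{f(z)}-1=z\left(\log\left\{\frac{f(z)}{z}\right\}\right)'\prec\mathcal{B}_\delta(z)\qquad(z\in\Delta),
\]
and, using the definition \eqref{log coef} of the logarithmic coefficients together with the expansion \eqref{Bs} and its coefficients \eqref{A}, this turns into
\[
\sum_{n=1}^{\infty}2n\gamma_n z^n\prec\sum_{n=1}^{\infty}A_n z^n\qquad(z\in\Delta),
\]
where $A_n=(-1)^{n-1}\sin(n\delta)/(n\sin\delta)$.

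Next I would apply the coefficient estimate for subordination to a convex univalent majorant, namely Rogosinski's theorem in the form \cite[Theorem X]{Rog}: if $\sum b_nz^n\prec\sum c_nz^n$ and the majorant is convex univalent in $\Delta$, then $|b_n|\le|c_1|$ for all $n\ge1$. The hypothesis is satisfied because $\mathcal{B}_\delta$ is convex univalent in $\Delta$ (this is recorded just before \eqref{Bs}, and is the same fact underlying Theorem~\ref{th sub M delta}). Hence $2n|\gamma_n|\le|A_1|$ for every $n\ge1$.

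Finally, from \eqref{A} one has $A_1=(-1)^{0}\sin\delta/(1\cdot\sin\delta)=1$, so $|A_1|=1$ and therefore $2n|\gamma_n|\le1$, that is $|\gamma_n|\le 1/(2n)$ for all $n\ge1$, as claimed. No genuine difficulty arises here: once the subordination $\sum 2n\gamma_nz^n\prec\sum A_nz^n$ is set up, the conclusion is immediate. The only points that deserve a moment's attention are to invoke the correct version of Rogosinski's theorem — the one valid for convex univalent majorants, as opposed to the $\ell^2$ version \cite[Theorem 6.2]{Dur} used earlier for the $\sum|\gamma_n|^2$ estimates — and to have already noted that $\mathcal{B}_\delta$ belongs to that class.
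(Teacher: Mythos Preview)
Your proof is correct and is exactly the argument the paper intends: the paper's own proof simply states that it is similar to the proof of Theorem~\ref{th gamma n of S al be} and omits the details, and you have supplied precisely those details with $P_{\alpha,\beta}$ replaced by $\mathcal{B}_\delta$ and $|B_1|$ replaced by $|A_1|=1$.
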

\begin{proof}
  The proof is similar to the proof of the Theorem \ref{th gamma n of S al be}, and thus the details are omitted
\end{proof}
\noindent
{\bf Acknowledgements} This work is supported by Young Researchers and Elite Club, Ardabil branch. The author would like to thank the anonymous referee(s) for their careful readings, valuable suggestions and comments, which helped to improve the presentation of the paper.\\
\noindent
{\bf Compliance with ethical standards}\\
{\bf Research involving human participants and/or animals} This research does not contain any studies with human participants and/or animals performed by the author.\\
{\bf Conflict of interest} The author declares there is no conflict of interest related to this article.

\end{document}